\newtheorem{myth}{Theorem}[section]
\newtheorem{mylem}{Lemma}[section]
\newtheorem{mypro}{Proposition}[section]
\newtheorem{mydef}{Definition}[section]
\def\XXint#1#2#3{{\setbox0=\hbox{$#1{#2#3}{\int}$}
    \vcenter{\hbox{$#2#3$}}\kern-.5\wd0}}
\def\YYint#1#2#3{{\setbox0=\hbox{$#1{#2#3}{\int}$}
    \lower1ex\hbox{$#2#3$}\kern-.46\wd0}}
\def\YYYint#1#2#3{{\setbox0=\hbox{$#1{#2#3}{\int}$}
    \lower0.35ex\hbox{$#2#3$}\kern-.48\wd0}}
\def\ZZint#1#2#3{{\setbox0=\hbox{$#1{#2#3}{\int}$}
    \raise1.15ex\hbox{$#2#3$}\kern-.57\wd0}}
\def\ZZZint#1#2#3{{\setbox0=\hbox{$#1{#2#3}{\int}$}
    \raise0.85ex\hbox{$#2#3$}\kern-.53\wd0}}
\begin{document}
\title{Euler-Lagrangian approach to  stochastic Euler equations in Sobolev Spaces}

\author{ Juan Londo\~no and Christian Olivera}

\date{}

\maketitle

\noindent \textit{ {\bf Key words and phrases:} 
Stochastic partial differential equations, Euler equation,  Lagragian formulation, Ito-Wentzell-Kunita Formula, Sobolev Spaces .}

\vspace{0.3cm} \noindent {\bf MSC2010 subject classification:} 60H15, 
 35R60, 
 35F10, 
 60H30. 

\begin{abstract}

The purpose of this paper is to establish the equivalence between Lagrangian and  classical formulations for the stochastic  incompressible Euler equations, the proof 
is based in  Ito-Wentzell-Kunita formula and stochastic analysis techniques.  Moreover, we prove a local
existence result  for the Lagrangian  formulation in  suitable  Sobolev Spaces. 
 \end{abstract}

\maketitle

%
%
%
%
%
%
%
%
%
%
%
%
%
%
%

 \smallskip


\section{Introduction}

We study a  Lagragian formulation (following \cite{Constantin}, \cite{Flandoli} and \cite{Pooley} ) of the incompressible Euler equations on a domain
$\mathbb{T}^{d}$ . The  Euler equations with transport noise model the flow of an incompressible inviscid fluid and are (classically) formulated in terms of a divergence--free vector field u (i.e.  $\nabla \cdot u=0$) as follows:

\begin{equation}\label{Euler}
du_{t} + (u_{t}  \cdot \nabla u_{t}+ \nabla p_{t} ) dt + \sum_{k} \mathcal{L}_{\sigma^{k}}^{\ast} u_{t} \  \circ dW_{t}^{k}=0
\end{equation}

where $p$ is a scalar potential representing internal pressure, $\mathcal{L}_{\sigma^{j}}^{\ast}u:=\sigma^{j}\cdot\nabla u+\left(\nabla\sigma^{j}\right)^{\ast} u$
($\mathcal{L}$ is the Lie derivative ), $W_{t}^{k}$ is a Wiener process and the integration is in the Stratonovich sense. The divergence-free condition reflects the incompressibility constraint.
 Equations related to
fluid dynamics with multiplicative noise appeared in several other works, see for instance \cite{Alonso1}, \cite{Alonso2}
\cite{Brzeniak}, \cite{Falkovich},\cite{Flandoli}, \cite{Flandoli2}, \cite{Flandoli3}, \cite{Flandoli4}  and many others.

The main topic of this work, namely the Euler-Lagrangian formulation, called also Constantin-Iyer
representation after \cite{Constantin}, \cite{Constantin2},  among related works, see for instance \cite{Besse},  \cite{Fang}, \cite{Ledesma}, \cite{Olivera}, \cite{Rezakhanlou}. 
First we  show the Euler-Lagrangian formulation   is  equivalent to
the stochastic  Euler equations (\ref{Euler}),  see Proposition \ref{equival},  the proof is based in 
 Ito-Wentzell-Kunita formula  and stochastic analysis techniques. We point that in 
\cite{Flandoli} the authors show that the Lagrangian formulation   verifies necessarily the equation  (\ref{Euler}), for $d=3$, using the vorticity equation.  
We show that both formulations are 
equivalent for  any dimension.  Using this formulation we  prove a local in time existence  result for solutions in 
$C^{0} (\left[ 0,T\right]; ( H^{s}(\mathbb{T}^{d})^{d}))$ with $s> \frac{d}{2}+ 1$, new for equation (\ref{Euler}).

\section{Equivalent formulations.}

Let $\mathcal{L}_{\sigma^{j}}^{\ast}$ be the adjoint operator of $\mathcal{L}_{\sigma^{j}}$ with respect to the inner product in $L^{2}(\mathbb{T}^{d},\mathbb{R}^{d})$:
\begin{equation*}
	\langle\mathcal{L}_{\sigma^{j}}^{\ast}v,w\rangle_{L^{2}}=-\langle v,\mathcal{L}_{\sigma^{j}}w\rangle_{L^{2}}.
\end{equation*}
Since $\sigma^{j}$ is divergence free, we have
\begin{equation*}
	\mathcal{L}_{\sigma^{j}}^{\ast}v=\sigma^{j}\cdot\nabla v+\left(\nabla\sigma^{j}\right)^{\ast}v.
\end{equation*}

\begin{mydef}
	Given a (divergence free) velocity $u$, we define the material diferential  $\mathcal{D}$ by
	\begin{equation*}
		\mathcal{D}\gamma:=d\gamma+u\cdot\nabla\gamma dt+\sigma^{i}\cdot\nabla\gamma\circ dW_{t}^{i}
	\end{equation*}
\end{mydef}

Analogously to the deterministic case, see Proposition 2 in \cite{Pooley},   by simple calculation we have the identities

 \begin{equation}\label{cadeia}
 	\mathcal{D}\left(\gamma\beta\right)=\left(\mathcal{D}\gamma\right)\beta+\gamma\left(\mathcal{D}\beta\right), 
 \end{equation}

\begin{equation}\label{material}
	\mathcal{D}\nabla\gamma=\nabla\mathcal{D}\gamma -\left(\nabla u\right)^{\ast}\nabla\gamma dt-\left(\nabla\sigma^{i}\right)^{\ast}\nabla\gamma\circ dW_{t}^{i}.
\end{equation}

We use the notation $\mathbb{P}$ for the Leray-Hodge projection onto the space of divergence-free functions.\\

\begin{mypro}\label{equival} Assume that  $u$ is $C^{3, \alpha}$-continuos semimartingale.  Then $u$  is  solution of the equation 
(\ref{Euler}) if and only if verifies the Lagrangian formulation 

 \begin{eqnarray}
	\label{EDO}
		& dX_{t}=\sum_{j}\sigma^{j}(X_{t})\circ dW_{t}^{j}+u_{t}(X_{t})dt\\
		\label{5}
		& u_{t}(x)=\mathbb{P}\left[(\nabla A_{t})^{\ast}u_{0}(A_{t})\right](x),
\end{eqnarray}

where  $\ast$ means the transposition of matrices and denote the back-to-labels map $A$ by setting $A\left(\cdot, t\right)=X^{-1}\left(\cdot, t\right)$.
\end{mypro}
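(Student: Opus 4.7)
The plan is to introduce the ``pre-projection'' field
\begin{equation*}
v_{t}(x):=(\nabla A_{t}(x))^{\ast}u_{0}(A_{t}(x)),
\end{equation*}
derive a single SPDE for $v_{t}$ via the Ito--Wentzell--Kunita formula, and then pass between that SPDE and (\ref{Euler}) through the Leray projection $\mathbb{P}$. The Ito--Wentzell--Kunita formula enters genuinely because both $u_{0}\circ A_{t}$ and $\nabla A_{t}$ must be differentiated simultaneously in time and in space, with $A_{t}$ itself a spatial semimartingale.

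First I compute $\mathcal{D}v_{t}$. Since $A_{t}(X_{t}(a))=a$, both the scalars $A_{t}^{j}$ and the composition $u_{0}(A_{t})$ are constant along the stochastic trajectories, so $\mathcal{D}A_{t}\equiv 0$ and $\mathcal{D}[u_{0}(A_{t})]\equiv 0$; the latter identity is the genuine use of Ito--Wentzell--Kunita. Applying the gradient identity (\ref{material}) to the scalar $A_{t}^{j}$ yields
\begin{equation*}
\mathcal{D}(\partial_{i}A_{t}^{j})=-(\partial_{i}u_{t}^{k})(\partial_{k}A_{t}^{j})\,dt-(\partial_{i}\sigma^{m,k})(\partial_{k}A_{t}^{j})\circ dW_{t}^{m},
\end{equation*}
and combining with the Leibniz rule (\ref{cadeia}) for $v_{t}^{i}=(\partial_{i}A_{t}^{j})\,u_{0}^{j}(A_{t})$ and contracting the index $j$ collapses everything to
\begin{equation*}
dv_{t}+\mathcal{L}_{u_{t}}^{\ast}v_{t}\,dt+\mathcal{L}_{\sigma^{m}}^{\ast}v_{t}\circ dW_{t}^{m}=0.
\end{equation*}

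The equivalence with (\ref{Euler}) now rests on two algebraic identities: for any scalar $q$ and divergence-free $w$ one has $\mathcal{L}_{w}^{\ast}\nabla q=\nabla(w\cdot\nabla q)$, which is a pure gradient and is killed by $\mathbb{P}$; and $(\nabla u)^{\ast}u=\nabla(|u|^{2}/2)$, so $\mathbb{P}\mathcal{L}_{u}^{\ast}u=\mathbb{P}(u\cdot\nabla u)$. For the direction (\ref{EDO})--(\ref{5})$\Rightarrow$(\ref{Euler}), use (\ref{5}) to Helmholtz-decompose $v_{t}=u_{t}+\nabla q_{t}$ and apply $\mathbb{P}$ to the SPDE for $v_{t}$; the two identities collapse everything to (\ref{Euler}) with the scalar pressure $p_{t}$ absorbing the gradient pieces of $u\cdot\nabla u$ and the Stratonovich contribution. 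For the converse, assuming $u$ solves (\ref{Euler}), I define $X_{t}$ via (\ref{EDO}), set $A_{t}:=X_{t}^{-1}$ and $v_{t}$ as above; the previous derivation still produces the same SPDE for $v_{t}$. Subtracting (\ref{Euler}) from it and using $\mathcal{L}_{u}^{\ast}u-u\cdot\nabla u=\nabla(|u|^{2}/2)$, the difference $w_{t}:=v_{t}-u_{t}$ satisfies a linear transport-type SPDE with pure gradient forcing and zero initial datum $w_{0}=0$. Letting $Q_{t}$ solve the associated scalar transport SPDE with $Q_{0}=0$, one checks that $\nabla Q_{t}$ satisfies the same linear SPDE as $w_{t}$; by uniqueness for that first-order SPDE, $w_{t}=\nabla Q_{t}$ is a gradient for all $t$, which is exactly (\ref{5}).

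The main obstacle is the rigorous simultaneous application of the Ito--Wentzell--Kunita formula to $u_{0}(A_{t})$ and $\nabla A_{t}$ and the bookkeeping of the ensuing Stratonovich corrections; this is where the $C^{3,\alpha}$ hypothesis on $u$ is used, since via classical Kunita theory it yields a flow $X_{t}$ of diffeomorphisms whose inverse and Jacobian are regular semimartingales, so that the formal cancellations above are genuine identities rather than heuristic ones.
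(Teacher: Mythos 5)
Your proposal is correct at the same (partly formal) level of rigor as the paper's own argument, but it organizes the proof differently, most notably in the direction ``\eqref{Euler} $\Rightarrow$ \eqref{EDO}--\eqref{5}''. For the direction ``Lagrangian $\Rightarrow$ Euler'' your computation of $\mathcal{D}v_t$ for $v_t=(\nabla A_t)^{\ast}u_0(A_t)$ via $\mathcal{D}A=0$, the Leibniz rule \eqref{cadeia} and the gradient identity \eqref{material} is essentially the paper's own ``alternative formal proof'', just phrased for the pre-projection field $v$ rather than for $u=(\nabla A)^{\ast}v-\nabla q$; the resulting SPDE $dv+\mathcal{L}^{\ast}_{u}v\,dt+\mathcal{L}^{\ast}_{\sigma^m}v\circ dW^m=0$ and the two algebraic facts $\mathcal{L}^{\ast}_{w}\nabla q=\nabla(w\cdot\nabla q)$ and $(\nabla u)^{\ast}u=\nabla\tfrac{|u|^2}{2}$ are exactly what the paper uses. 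For the converse direction the paper instead works forward along the flow: it applies the It\^o--Wentzell--Kunita formula to $u(t,X_t)$, then the product rule to $(\nabla X_t)^{\ast}u(t,X_t)$, and shows by explicit cancellation that this quantity equals $u_0+\nabla\tilde q$, whence \eqref{5} after composing with $A_t$ and projecting. You instead derive the same SPDE for the candidate $v_t$, subtract \eqref{Euler}, and conclude that $w_t=v_t-u_t$ solves a homogeneous-type linear transport SPDE with gradient forcing and $w_0=0$, so that $w_t=\nabla Q_t$ by uniqueness. This is a legitimate alternative, and it has the aesthetic advantage of running both implications through a single SPDE for $v$; the price is that you must invoke (and, in a fully rigorous write-up, prove) uniqueness for the linear first-order SPDE $dw+\mathcal{L}^{\ast}_{u}w\,dt+\mathcal{L}^{\ast}_{\sigma^m}w\circ dW^m=\nabla G\,dt$ --- and the natural proof of that uniqueness (transporting $(\nabla X_t)^{\ast}w(t,X_t)$ along the flow and observing its differential vanishes) is precisely the computation the paper carries out explicitly. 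So the two routes are ultimately equivalent in content, with the paper's version being self-contained and yours delegating the key cancellation to a uniqueness lemma that you should state and justify.
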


\begin{proof}
	$(\Rightarrow)$
	We have 
	\begin{equation}
		u(t,x)=u_{0}(x)-\int_{0}^{t}u\nabla uds-\sum_{j}\int_{0}^{t}\mathcal{L}^{\ast}_{\sigma^{j}}u\circ dW_{s}^{j}-\int_{0}^{t}\nabla pds
	\end{equation}
	and
	\begin{equation}
		X_{t}=x+\int_{0}^{t}u\left( s,X_{s}\right) ds+\sum_{j}\int_{0}^{t}\sigma^{j}\left( X_{s}\right)\circ dW_{s}^{j}.
	\end{equation}
 Then, from It\^o-Wentzell-Kunita formula, see Theorem 8.3 in  Chapter I of \cite{Ku2}, the $k$-th component of the process $u(t,X_{t})$ is given by

	\begin{align}
		\nonumber
		u^{k}\left( t,X_{t}\right)= & u_{0}\left( x\right)-\int_{0}^{t}\left( \left( u\cdot\nabla\right) u^{k}\left( s,X_{s}\right)+\left(\nabla p\right)\left( X_{s}\right)\right) ds\\
	\label{cancell}
		 &-\int_{0}^{t}\sum_{j}((\sigma^{j}\cdot\nabla) u^{k}\left( s,X_{s}\right)+\frac{\partial\sigma^{j}}{\partial x_{k}}\left( X_{s}\right) u\left( s,X_{s}\right))\circ dB^{j}_{s}\\
		 \nonumber
		 &+\sum_{i}\int_{0}^{t}\frac{\partial u^{k}}{\partial x_{i}}(s,X_{s})u^{i}(s,X_{s})ds+\sum_{i,j}\int_{0}^{t}\frac{\partial u^{k}}{\partial x_{i}}(s,X_{s}) \sigma_{i}^{j}(X_{s})\circ dB^{j}_{s}.
	\end{align}

	We  observe that 
	\begin{align*}
		\int_{0}^{t}(u\cdot\nabla)u^{k}(s,X_{s}) ds &=\int_{0}^{t}\sum_{i}u^{i}\frac{\partial u^{k}}{\partial x_{i}}\big|_{X_{s}}ds\\
		&=\sum_{i}\int_{0}^{t}u^{i}(s,X_{s})\frac{\partial u^{k}}{\partial x_{i}}(s, X_{s})ds\\
		&=\sum_{i}\int_{0}^{t}\frac{\partial u^{k}}{\partial x_{i}}(s,X_{s}) u^{i}(s,X_{s}) ds
	\end{align*}
	and
	\begin{align*}
		\int_{0}^{t}\sum_{j}(\sigma^{j}\cdot\nabla) u^{k}(s,X_{s})\circ dW_{s}^{j} &=\sum_{j}\int_{0}^{t}\sum_{i}\sigma_{i}^{j}\frac{\partial u^{k}}{\partial x_{i}}\big|_{X_{s}}\circ dW_{s}^{j}\\
		&=\sum_{i,j}\int_{0}^{t}\frac{\partial u^{k}}{\partial x_{i}}(s,X_{s})\sigma_{i}^{j}(X_{s})\circ dW_{s}^{j}.
	\end{align*}
	
	Making obvious cancellation we obtain
	\begin{equation*}
		u^{k}(t,X_{t})=u_{0}(x)-\int_{0}^{t}\left(\nabla p\right)(X_{s}) ds-\sum_{j}\int_{0}^{t}\frac{\partial\sigma^{j}}{\partial x_{k}}\left( X_{s}\right) u^{k}\left( X_{s}\right)\circ dW_{s}^{j},
	\end{equation*}
	i.e.,
	\begin{equation*}
		u(t,X_{t})=u_{0}(x)-\int_{0}^{t}\left(\nabla p\right)\left( X_{s}\right) ds-\sum_{j}\int_{0}^{t}\left(\nabla\sigma^{j}\right)^{\ast}u\big|_{X_{s}}\circ dW_{s}^{j}.
	\end{equation*}
	
		Now, we observe that   $\sum_{i}\frac{\partial X^{i}}{\partial x_k}u^{i}$ is the $k$-th coordinate of $\left(\nabla X\right)^{\ast}u$ and 
	\begin{equation*}
		\frac{\partial X^{i}}{\partial x_k}=\delta_{ik}+\int_{0}^{t}\nabla u^{i}(s,X_{s})\frac{\partial X_{s}}{\partial x_k}ds+\sum_{j}\int_{0}^{t}\left(\nabla\sigma_{i}^{j}\right)(X_{s})\frac{\partial X_{s}}{\partial x_k}\circ dW_{s}^{j}.
	\end{equation*}
Thus from It\^o's formula for the product of two semimartingales we deduce

	\begin{align*}
		\frac{\partial X^{i}}{\partial x_k}u^{i}\left( t,X_{t}\right) &=\delta_{ik}u_{0}(x)+\int_{0}^{t}u^{i}(s,X_{s})\nabla u^{i}(s,X_{s})\frac{\partial X_{s}}{\partial x_k} ds\\
		&\,\,\,+\int_{0}^{t}u^{i}(s,X_{s})\left(\sum_{j}\left(\nabla\sigma_{i}^{j}\right)\left( X_{s}\right)\frac{\partial X_{s}}{\partial x_k}\right)\circ dW_{s}^{j}-\int_{0}^{t}\frac{\partial X^{i}}{\partial x_k}\left(\nabla p\right)(X_{s}) ds\\
		&\,\,\,-\int_{0}^{t}\frac{\partial X^{i}}{\partial x_k}\left(\sum_{j}\left(\frac{\partial\sigma^{j}}{\partial x_i}\right)(X_{s}) u(s,X_{s})\right)\circ dW_{s}^{j}.
	\end{align*}
	Note that
	\begin{align*}
		\frac{\partial X^{i}}{\partial x_k}\left(\frac{\partial\sigma^{j}}{\partial x_i}\right) u &=\frac{\partial X^{i}}{\partial x_k}\left(\frac{\partial\sigma_{1}^{j}}{\partial x_i},\cdots ,\frac{\partial\sigma_{j}^{d}}{\partial  x_i}\right)\left( u_{1}, u_{2}, \cdots , u_{d}\right)^{\ast}\\
		&=\frac{\partial X^{i}}{\partial x_k}\sum_{\alpha}\frac{\partial\sigma_{\alpha}^{j}}{\partial x_i} u^{\alpha},
	\end{align*}
	hence $\sum_{i}\frac{\partial X^{i}}{\partial x_k}\left(\frac{\partial\sigma^{j}}{\partial x_i}\right) u=\sum_{i}\sum_{\alpha}\frac{\partial X^{i}}{\partial x_k}\frac{\partial\sigma_{\alpha}^{j}}{\partial x_i}u^{\alpha}$.\\

	On the other hand, we have 
	\begin{align*}
		u^{i}\left(\nabla\sigma_{i}^{j}\right)\frac{\partial X_{s}}{\partial x_k} &=u^{i}\left(\frac{\partial\sigma_{i}^{j}}{\partial x_1},\frac{\partial\sigma_{i}^{j}}{\partial x_2},\cdots ,\frac{\partial\sigma_{i}^{j}}{\partial x_d}\right)\left(\frac{\partial X^{1}}{\partial x_k},\cdots , \frac{\partial X^{d}}{\partial x_k}\right)^{\ast}\\
		&=u^{i}\sum_{\beta}\frac{\partial\sigma_{i}^{j}}{\partial\beta}\frac{\partial X^{\beta}}{\partial x_k},
	\end{align*}
	hence $\sum_{i}u^{i}\left(\nabla\sigma_{i}^{j}\right)\frac{\partial X_{s}}{\partial x_k}=\sum_{i}\sum_{\beta}u^{i}\frac{\partial\sigma_{i}^{j}}{\partial x_\beta}\frac{\partial X^{\beta}}{\partial x_k}$.\\

	Thus we obtain

	\begin{equation*}
		\sum_{i}\int_{0}^{t}u^{i}(s,X_{s})\left(\sum_{j}\left(\nabla\sigma_{i}^{j}\right)\left( X_{s}\right)\frac{\partial X_{s}}{\partial x_k}\right)\circ dW_{s}^{j}=\sum_{i}\int_{0}^{t}\frac{\partial X^{i}}{\partial x_k}\left(\sum_{j}\left(\frac{\partial\sigma^{j}}{\partial x_i}\right)(X_{s}) u(s,X_{s})\right)\circ dW_{s}^{j}
	\end{equation*}
	Then the $k-$th term of $\left(\nabla X_{t}\right)^{\ast}u\left( t,X_{t}\right)$ is
	\begin{equation*}
		\sum_{i}\frac{\partial X^{i}}{\partial x_k}u^{i}\left( t,X_{t}\right)=\sum_{i}\delta_{ik}u_{0}(x)-\sum_{i}\int_{0}^{t}\frac{\partial X^{i}}{\partial x_k}\left(\nabla p\right)(X_{s}) ds+\sum_{i}\int_{0}^{t}u^{i}\left( s,X_{s}\right)\nabla u^{i}\left( s,X_{s}\right)\frac{\partial X_{s}}{\partial x_k} ds,
	\end{equation*}
	i.e.,
	\begin{align*}
		\left(\nabla X_{t}\right)^{\ast}u\left( t,X_{t}\right)&=u_{0}(x)+\int_{0}^{t}\left(\nabla X_{s}\right)^{\ast}\left(\nabla u(s,X_{s})\right)^{\ast}u\left( s,X_{s}\right) ds-\int_{0}^{t}\left(\nabla X_{s}\right)^{\ast}\left(\nabla p\right)(X_{s}) ds\\
		&=u_{0}(x)+\frac{1}{2}\int_{0}^{t}\nabla\left(\vert u_{s}\vert^{2}\circ X_{s}\right) ds-\int_{0}^{t}\nabla\left( p\circ X_{s}\right) ds\\
		&=u_{0}+\nabla\tilde{q},
	\end{align*}
	where $\tilde{q}:=\int_{0}^{t}\left[\frac{1}{2}\left(\vert u_{s}\vert^{2}\circ X_{s}\right)-p\circ X\right] ds$.\\

	Then, if we denote $M_{t}:=\left(\nabla X_{t}\right)^{\ast}$, follow that 
	\begin{align*}
		u_{t}\circ X_{t}=u\left( t,X_{t}\right)&=M_{t}^{-1}u_{0}+M_{t}^{-1}\nabla\tilde{q}\\
		&=(\nabla A_{t}\big|_{X_{t}})^{\ast}u_{0}+(\nabla A_{t}\big|_{X_{t}})^{\ast}\nabla\tilde{q}.
	\end{align*}

	Finally we conclude 
	
	\begin{align*}
		u_{t}&=(\nabla A_{t})^{\ast}u_{0}\circ A_{t}+(\nabla A_{t})^{\ast}(\nabla\tilde{q})(A_{t})\\
		&=(\nabla A_{t})^{\ast}u_{0}\circ A_{t}+\nabla q,
	\end{align*}
	where $q:=\tilde{q}\circ A$. Therefore, $u_{t}=\mathbb{P}\left[\left(\nabla A_{t}\right)^{\ast}u_{0}\circ A_{t}\right]$.\\

	$(\Leftarrow)$  We follow directly from proposition 2.2 in  \cite{Flandoli2}. We shall also show  an alternative formal proof.	
	We set $v=u_{0}\circ A$, then by Theorem 2.3.2 of  \cite{Chow} we have $\mathcal{D}A=0$ and $\mathcal{D} v=0$.\\
Since $u$ satisfies \eqref{5}  there exists a function $q$ such that
\[u=\left(\nabla A\right)^{\ast}v-\nabla q.
\]
Then by \eqref{cadeia} and \eqref{material} we have 
	\begin{align*}
		\mathcal{D}u &=\mathcal{D}\left[\left(\nabla A\right)^{\ast}v-\nabla q\right]\\
		&=\mathcal{D}\left[\left(\nabla A\right)^{\ast}v\right]-\mathcal{D}\nabla q\\
		&=\left[\mathcal{D}\left(\nabla A\right)^{\ast}\right] v+\left(\nabla A\right)^{\ast}\mathcal{D}v-\mathcal{D}\nabla q\\
		&=\left[\nabla\mathcal{D}A-\left(\nabla u\right)^{\ast}\left(\nabla A\right) dt-\left(\nabla\sigma^{i}\right)^{\ast}\nabla A\circ dW_{t}^{i}\right]^{\ast} v\\
		&\,\,\,\,\,\,\,-\nabla\mathcal{D} q+\left(\nabla u\right)^{\ast}\nabla qdt+\left(\nabla\sigma^{i}\right)^{\ast}\nabla q\circ dW_{t}^{i}.
\end{align*}
Hence, after a calculation and a rearrangement of the terms, we get 
\begin{align*}
		\mathcal{D}u &=-\left(\nabla u\right)^{\ast}\left(\nabla A\right)^{\ast}vdt+\left(\nabla u\right)^{\ast}\nabla qdt-\nabla\mathcal{D}q\\
		&\,\,\,\,\,\,\,\,-\left(\nabla\sigma^{i}\right)^{\ast}\left[\left(\nabla A\right)^{\ast}v-\nabla q\right]\circ dW_{t}^{i}\\
		&=-\left(\nabla u\right)^{\ast}udt-\nabla\mathcal{D}q-\left(\nabla\sigma^{i}\right)^{\ast}u\circ dW_{t}^{i}\\
		&=-\nabla\frac{|u|^{2}}{2}dt-\nabla\mathcal{D}q-\left(\nabla\sigma^{i}\right)^{\ast}u\circ dW_{t}^{i}.
	\end{align*}
	Then,
	\begin{align*}
		u(t,x)&=u_{0}(x)-\int_{0}^{t}u\cdot\nabla uds-\int_{0}^{t}\sigma^{i}\cdot\nabla u\circ dW_{s}^{i}-\nabla\int_{0}^{t}\frac{\vert u\vert^{2}}{2}ds-\nabla 
		\int_{0}^{t} \mathcal{D}q_{s}
	-\int_{0}^{t}\left(\nabla\sigma^{i}\right)^{\ast}u\circ dW_{s}^{i}\\
		&=u_{0}(x)-\int_{0}^{t}u\cdot\nabla uds-\int_{0}^{t}\mathcal{L}_{\sigma^{i}}^{\ast}u\circ dW_{s}^{i}- \int_{0}^{t} \nabla p dt ,
	\end{align*}
	
	where formally we have
	
	\begin{equation*}
		p=\frac{|u|^{2}}{2} + \partial_{t}q + u\cdot\nabla q+\sigma^{i}\cdot\nabla q \ \partial_{t}W_{t}^{i}
 .
	\end{equation*}

	Therefore we conclude that $u$ is solution of the Euler equation (\ref{Euler}).

\end{proof}

\section{An Existence Theorem}

\subsection{  Decomposition of the  Flow }

We use the idea of \cite{Flandoli} to decompose  the stochastic flow in the system \eqref{EDO}-\eqref{5}. More precisely, we consider   the  stochastic equation without drift:
\begin{equation*}
	d\varphi_{t}=\sum_{k}\sigma_{k}\left(\varphi_{t}\right)\circ dW_{t}^{k},\,\,\,\, \varphi_{0}=I,
\end{equation*}	
where $I$ is the identity diffeomorphism of $\mathbb{T}^{d}$. Under the assumption that $\sum_{k}\|\sigma_{k}\|_{5,\alpha^{\prime}}^{2}<\infty$ for $\alpha^{\prime}\in(0,1)$, the above equation generates a stochastic flow $\left\{\varphi_{t}\right\}_{t\geq 0}$ of $C^{4,\alpha}-$diffeomorphisms on $\mathbb{T}^{d}$, where $\alpha\in(0,\alpha^{\prime})$.\\
We denote by $\omega$ a generic random element in a probability space $\Omega$. For a given random vector field $u:\Omega\times\left[ 0,T\right]\times\mathbb{T}^{d}\to\mathbb{R}^{d}$, we define
\begin{equation}
	\tilde{u}\left(\omega, x\right)=\left[\left(\varphi_{t}\left(\omega,\cdot\right)^{-1}\right)_{\ast} u_{t}\left(\omega,\cdot\right)\right]\left( x\right)
\end{equation}
which is the pull-back of the field $u_{t}\left(\omega,\cdot\right)$ by the stochastic flow $\left\{\varphi_{t}\left(\omega,\cdot\right)\right\}_{t\geq 0}$. If we denote by $K_{t}\left(\omega, x\right)=\left(\nabla\varphi_{t}\left(\omega, x\right)\right)^{-1}$, i.e., the inverse of the Jacobi matrix, then
\begin{equation}
	\tilde{u}\left(\omega, x\right)=K_{t}\left(\omega, x\right) u_{t}\left(\omega,\varphi_{t}\left(\omega, x\right)\right).
\end{equation}

From this expression we see that if $u\in C^{0}\left(\left[ 0,T\right], H^{s}\right)$ a.s., then one also a.s. $\tilde{u}\in C^{0}\left(\left[ 0,T\right], H^{s}\right)$. Moreover, if the process $u$ is adapted, then so is $\tilde{u}$. Now we consider the random ODE
\begin{equation}
	\dot{Y}_{t}=\tilde{u}_{t}\left( Y_{t}\right),\,\,\,\,\, Y_{0}=I.
\end{equation}
Applying the It\^o-Wentzell-Kunita formula, we see that (cf. \cite{Flandoli})
\begin{equation*}
	X_{t}=\varphi_{t}\circ Y_{t}
\end{equation*}
is the flow of $C^{3,\alpha}-$diffeomorphisms associated to the SDE in \eqref{EDO}-\eqref{5}.\\


\subsection{ Sobolev Estimations.}

All notations and results in this subsection we follow from \cite{Pooley}. For $r\geq 0$, we will use the notation $H^r$ variously for sacalar or vector valued functions in $H^{r}\left(\mathbb{T}^{d}\right)$ (componentwise), where this does not cause ambiguity. We will often consider functions in spaces of the norm $C^{0} (\left[ 0,T\right]; ( H^{s}(\mathbb{T}^{d})^{d}))$.\\
To simplify notation we define $\Sigma_{s}\left( T\right)$ (usually denoted $\Sigma_{s}$) for $T\geq 0$ and $s\geq 0$ by
\[
\Sigma_{s}\left( T\right):=C^{0}(\left[ 0,T\right];( H^{s}(\mathbb{T}^{d}))^{d}).
\]
We consider the natural norm on $\Sigma_{s}$:
\[
\| u\|_{\Sigma_{s}}=\sup_{t\in[0,T]}\| u(t)\|_{s}
\]

We begin by stating two inequalities concerning the advection term $(u\cdot\nabla)v$, using the notation $B(u, v):=(u\cdot\nabla)v$. The following two results are taken from Lemma 1 and Lemma 2 of \cite{Pooley}.
\begin{mylem}
For $s>\frac{d}{2}$ there exists $C_{1}>0$ such that if $u\in H^{s}$ and $v\in H^{s+1}$ then $B\left( u, v\right)\in H^{s}$ and
	\[
	\| B\left( u,v\right)\|_{s}\leq C_{1}\| u\|_{s}\| v\|_{s+1}.
	\]
	\end{mylem}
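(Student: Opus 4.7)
The plan is to reduce the estimate to the fact that $H^{s}(\mathbb{T}^{d})$ is a Banach algebra whenever $s > d/2$, i.e.\ that there exists $C>0$ with $\|fg\|_{s}\leq C\|f\|_{s}\|g\|_{s}$ for every pair of scalar functions $f,g\in H^{s}$. This algebra property is the decisive fact; everything else is bookkeeping.

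Accepting the algebra property, I would expand componentwise
\[
B(u,v)^{k}=\sum_{i=1}^{d}u^{i}\,\partial_{i}v^{k},\qquad k=1,\dots,d,
\]
observe that $u^{i}\in H^{s}$ by assumption and that $\partial_{i}v^{k}\in H^{s}$ because $v\in H^{s+1}$ (with $\|\partial_{i}v^{k}\|_{s}\leq\|v\|_{s+1}$), and then apply the algebra inequality to each summand:
\[
\|u^{i}\,\partial_{i}v^{k}\|_{s}\leq C\,\|u^{i}\|_{s}\,\|\partial_{i}v^{k}\|_{s}\leq C\,\|u\|_{s}\,\|v\|_{s+1}.
\]
Summing the $d^{2}$ contributions and taking the vectorial norm gives $\|B(u,v)\|_{s}\leq C_{1}\|u\|_{s}\|v\|_{s+1}$ with $C_{1}=Cd$ (up to an absolute constant coming from the equivalence of $\ell^{2}$ and $\ell^{1}$ norms on $\mathbb{R}^{d}$).

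The nontrivial step is, of course, the algebra property itself. The cleanest way to establish it on $\mathbb{T}^{d}$ is to use the Fourier characterization $\|f\|_{s}^{2}=\sum_{k\in\mathbb{Z}^{d}}\langle k\rangle^{2s}|\hat{f}(k)|^{2}$ with $\langle k\rangle=(1+|k|^{2})^{1/2}$, write $\widehat{fg}(k)=\sum_{j}\hat{f}(j)\hat{g}(k-j)$, and apply the Peetre-type inequality $\langle k\rangle^{s}\leq 2^{s/2}(\langle j\rangle^{s}+\langle k-j\rangle^{s})$ to split the convolution into two symmetric pieces. Each piece is controlled by a Cauchy--Schwarz argument in which the condition $s>d/2$ enters precisely to ensure that $\sum_{k}\langle k\rangle^{-2s}<\infty$, equivalently the Sobolev embedding $H^{s}\hookrightarrow L^{\infty}$. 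Alternatively one could invoke a Littlewood--Paley/paraproduct decomposition, but that is heavier machinery than required here.

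The main obstacle is therefore not conceptual but notational: ensuring that constants do not depend on $u$ or $v$, and that the vector-valued version of the algebra estimate follows from the scalar one simply by summing over components. I would organise the argument so that the algebra estimate is stated as a preliminary fact (quoting, e.g., standard references on Sobolev spaces on the torus), after which the Lemma follows in two lines.
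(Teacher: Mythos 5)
Your argument is correct, and it is essentially the standard proof: the paper itself does not prove this lemma but simply quotes it as Lemma~1 of the cited work of Pooley and Robinson, where the estimate is obtained in just the way you describe, namely by writing $B(u,v)^{k}=\sum_{i}u^{i}\partial_{i}v^{k}$ and invoking the Banach algebra property of $H^{s}(\mathbb{T}^{d})$ for $s>d/2$ together with $\|\partial_{i}v^{k}\|_{s}\leq\|v\|_{s+1}$. Your Fourier--Peetre--Cauchy--Schwarz sketch of the algebra property is the standard one and is sound, so there is nothing to add.
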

\begin{mylem}\label{lemma2}
	If $s>\frac{d}{2}+1$ there exists $C_{2}>0$ such that for $u\in H^{s}$, $v\in H^{s+1}$ with divergence-free we have
	\[
	\lvert \left( B\left( u,v\right),v\right)_{s}\vert\leq C_{2}\| u\|_{s}\| v\|_{s}^{2}
	\]
\end{mylem}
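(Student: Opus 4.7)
The plan is to expand the $H^s$ inner product as
\[
(B(u,v),v)_s=\sum_{|\alpha|\le s}\int_{\mathbb{T}^d} D^\alpha\!\bigl((u\cdot\nabla)v\bigr)\cdot D^\alpha v\,dx,
\]
and treat the top-order multi-indices $|\alpha|=s$ separately from the rest. For $|\alpha|\le s-1$ the previous lemma, applied with $s-1$ in place of $s$ (admissible since $s>\frac{d}{2}+1$), gives $\|B(u,v)\|_{s-1}\le C\|u\|_s\|v\|_s$, and Cauchy--Schwarz against $\|v\|_{s-1}\le\|v\|_s$ already yields the desired bound for the sub-top-order contribution.

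For $|\alpha|=s$ I would apply the Leibniz rule to split
\[
D^\alpha\!\bigl((u\cdot\nabla)v\bigr)=(u\cdot\nabla)D^\alpha v+\sum_{0<\beta\le\alpha}\binom{\alpha}{\beta}(D^\beta u\cdot\nabla)D^{\alpha-\beta}v.
\]
The first piece is the dangerous one because it contains $s+1$ derivatives of $v$; however, paired with $D^\alpha v$ and integrated it rewrites as
\[
\int_{\mathbb{T}^d}(u\cdot\nabla)D^\alpha v\cdot D^\alpha v\,dx=\tfrac12\int_{\mathbb{T}^d}u\cdot\nabla|D^\alpha v|^2\,dx=-\tfrac12\int_{\mathbb{T}^d}(\nabla\cdot u)|D^\alpha v|^2\,dx=0,
\]
where the last equality uses the divergence-free assumption on $u$. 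This is precisely the step that buys the missing derivative; without incompressibility one would be forced back to the weaker estimate of the preceding lemma.

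For the remaining Leibniz terms, which together assemble the commutator $[D^\alpha,u\cdot]\nabla v$, I would invoke the Moser (Kato--Ponce) commutator inequality
\[
\bigl\|[D^\alpha,u\cdot]\nabla v\bigr\|_{L^2}\le C\bigl(\|\nabla u\|_{L^\infty}\|\nabla v\|_{s-1}+\|u\|_s\|\nabla v\|_{L^\infty}\bigr).
\]
Because $s>\frac{d}{2}+1$, Sobolev embedding turns both $\|\nabla u\|_{L^\infty}$ and $\|\nabla v\|_{L^\infty}$ into constants times $\|u\|_s$ and $\|v\|_s$, while $\|\nabla v\|_{s-1}\le\|v\|_s$ is automatic; a final Cauchy--Schwarz against $D^\alpha v$ closes the estimate.

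The main obstacle is the Moser commutator estimate itself: carefully distributing the $s$ derivatives between $u$ and $\nabla v$ so that no factor exceeds the available regularity requires a Gagliardo--Nirenberg interpolation, and is the technical heart of the proof. Once that tool is granted, the role of the threshold $s>\frac{d}{2}+1$ is simply to place $\nabla u,\nabla v\in L^\infty$, after which the whole argument reduces to the divergence-free cancellation displayed above together with routine bookkeeping.
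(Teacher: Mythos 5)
Your argument is correct and is essentially the proof of Lemma 2 in Pooley--Robinson, which this paper only cites rather than reproves: the divergence-free cancellation $\int_{\mathbb{T}^d}(u\cdot\nabla)D^{\alpha}v\cdot D^{\alpha}v\,dx=0$ for the top-order term, a Kato--Ponce/Moser commutator bound for the rest, and the embedding $H^{s-1}\hookrightarrow L^{\infty}$ guaranteed by $s>\frac{d}{2}+1$. The one caveat is that your multi-index expansion presupposes $s\in\mathbb{N}$; for general real $s>\frac{d}{2}+1$ one runs the identical scheme with $\Lambda^{s}=(1-\Delta)^{s/2}$ in place of the family $\{D^{\alpha}\}_{|\alpha|=s}$, and the commutator estimate you invoke is available in that form as well.
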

We use the following shorthand for closed balls in $\Sigma_{s}$:
\[
B_{M}=\overline{B_{\|\cdot\|_{\Sigma_{s}}}\left( 0,M\right)},
\]
i.e., $B_{M}$ is the closed unit ball centred at the origin of radius $M>0$ with respect to the norm $\|\cdot\|_{\Sigma_{s}}$. Where ambiguity could arise we write $B_{M}\left( T\right)$ for the closed ball in $\Sigma_{s}\left( T\right)$.\\

We need the following key technical result, see Lemma 3 of \cite{Pooley}.

\begin{mylem}\label{lemma3}
	If $s>\frac{d}{2}+1$ and $\eta,\, v\in\Sigma_{s}\left( T\right)$ then $\mathbb{P}\left[\left(\nabla\eta\right)^{\ast} v\right]\in\Sigma_{s}$ and there exists a constant $C_{3}>0$ (independent of $\eta,\, v\, t$ and $T$) such that for fixed $t$,
 \begin{equation}\label{eq1lemma3}
 	\|\mathbb{P}\left[\left(\nabla\eta\right)^{\ast} v\right]\|_{r}\leq C_{3}\| u\|_{s}\| v\|_{r},
 \end{equation}
where $r=s$ or $r=s-1$. Furthermore, there exists $C_{3}^{\prime}>0$ such that for any $M>0$ and $T>0$, the following bounds hold uniformly with respect to $t\in\left[ 0, T\right]$ for any $\eta_{1},\, \eta_{2},\, v_{1},\, v_{2}\in B_{M}\left( T\right)$:
\begin{equation}\label{eq2lemma3}
	\|\mathbb{P}\left[\left(\nabla\eta_{1}\right)^{\ast} v_{1}-\left(\nabla\eta_{2}\right)^{\ast} v_{2}\right]\|_{X}\leq C_{3}^{\prime}\left(\|\eta_{1}-\eta_{2}\|_{X}+\| v_{1}-v_{2}\|_{X}\right),
\end{equation}
where $X$ is $L^{2}\left(\mathbb{T}^{d}\right)$ or $H^{s-1}$.
\end{mylem}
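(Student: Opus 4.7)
The plan is to establish (\ref{eq1lemma3}) and (\ref{eq2lemma3}) separately, treating $r = s-1$ before the sharper $r = s$. First I handle the case $r = s - 1$. Since the hypothesis $s > d/2 + 1$ yields $s - 1 > d/2$, the Sobolev space $H^{s-1}(\mathbb{T}^d)$ is a Banach algebra, so
\[
\|(\nabla \eta)^{\ast} v\|_{s-1} \leq C \|\nabla \eta\|_{s-1} \|v\|_{s-1} \leq C \|\eta\|_s \|v\|_{s-1}.
\]
Because the Leray projection $\mathbb{P}$ is a zero-order Fourier multiplier and therefore bounded on every $H^r$, this immediately yields (\ref{eq1lemma3}) at this regularity.

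The case $r = s$ is the main obstacle. A brute-force product bound would require $\eta \in H^{s+1}$, one derivative more than is available. The rescue is the algebraic identity
\[
(\nabla \phi)^{\ast} w = \nabla(\phi \cdot w) - (\nabla w)^{\ast} \phi,
\]
valid for any vector fields $\phi, w$, combined with $\mathbb{P} \circ \nabla \equiv 0$. I estimate $\|\partial^{\alpha} \mathbb{P}[(\nabla \eta)^{\ast} v]\|_{L^2} = \|\mathbb{P} \partial^{\alpha}[(\nabla \eta)^{\ast} v]\|_{L^2}$ for each $|\alpha| \leq s$; Leibniz produces a sum over $\beta \leq \alpha$ of terms $(\partial^{\beta} \nabla \eta)^{\ast} \partial^{\alpha - \beta} v$. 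For $|\alpha| = s$ and $\beta = \alpha$, the ``top-order'' term $(\nabla \partial^{\alpha} \eta)^{\ast} v$ would need $\eta \in H^{s+1}$; but applying the identity with $\phi = \partial^{\alpha} \eta$ and $w = v$ shows that its $\mathbb{P}$-image equals $-\mathbb{P}[(\nabla v)^{\ast} \partial^{\alpha} \eta]$, whose $L^2$ norm is bounded by
\[
\|\nabla v\|_{L^{\infty}} \|\partial^{\alpha} \eta\|_{L^2} \leq C \|v\|_s \|\eta\|_s,
\]
using the Sobolev embedding $H^{s-1} \hookrightarrow L^{\infty}$ afforded by $s - 1 > d/2$. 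The remaining Leibniz terms distribute the $s+1$ derivatives between $\eta$ and $v$ with at least one derivative on $v$, and they are bounded by $C \|\eta\|_s \|v\|_s$ via standard Moser / Gagliardo--Nirenberg product estimates, the extreme cases $|\beta| = 0$ and $|\beta| = s-1$ following at once from $\nabla \eta, \partial v \in L^{\infty}$.

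For the Lipschitz bound (\ref{eq2lemma3}) I decompose
\[
(\nabla \eta_1)^{\ast} v_1 - (\nabla \eta_2)^{\ast} v_2 = (\nabla(\eta_1 - \eta_2))^{\ast} v_1 + (\nabla \eta_2)^{\ast}(v_1 - v_2).
\]
For $X = L^2$ the same identity transforms the first summand into $-\mathbb{P}[(\nabla v_1)^{\ast}(\eta_1 - \eta_2)]$, controlled by $\|\nabla v_1\|_{L^{\infty}} \|\eta_1 - \eta_2\|_{L^2} \leq C_M \|\eta_1 - \eta_2\|_{L^2}$ thanks to $v_1 \in B_M$; the second summand is directly estimated by $\|\nabla \eta_2\|_{L^{\infty}} \|v_1 - v_2\|_{L^2} \leq C_M \|v_1 - v_2\|_{L^2}$. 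For $X = H^{s-1}$ one instead applies (\ref{eq1lemma3}) at level $r = s - 1$ to each summand of the same decomposition, again using that $\eta_i, v_i \in B_M$ to absorb the factor $\|\eta\|_s$ into a constant $C_M$.

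The delicate point is the derivative loss in the $r = s$ bound: the product $(\nabla \eta)^{\ast} v$ a priori lives only in $H^{s-1}$, and it is precisely the annihilation of gradients by $\mathbb{P}$ that recovers the missing derivative. The entire argument hinges on the observation that only the extreme Leibniz term $\beta = \alpha$ requires this projection identity, while all intermediate terms yield to routine product estimates.
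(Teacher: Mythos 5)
The paper offers no proof of this lemma at all --- it is quoted verbatim from Lemma 3 of Pooley--Robinson --- so your argument must stand on its own. Its core is sound and is essentially the mechanism behind the cited result: the identity $(\nabla\phi)^{\ast}w=\nabla(\phi\cdot w)-(\nabla w)^{\ast}\phi$ together with $\mathbb{P}\nabla\equiv 0$ trades the top-order derivative from $\eta$ to $v$, and your treatment of the $r=s-1$ case (algebra property of $H^{s-1}$), of the extreme Leibniz term for $r=s$, and of the $X=L^{2}$ Lipschitz bound are all correct. One step would fail as written: in the $X=H^{s-1}$ case of (\ref{eq2lemma3}), applying (\ref{eq1lemma3}) directly to the first summand $(\nabla(\eta_{1}-\eta_{2}))^{\ast}v_{1}$ of your decomposition produces the factor $\|\eta_{1}-\eta_{2}\|_{s}$, not $\|\eta_{1}-\eta_{2}\|_{s-1}$; ``absorbing it into a constant $C_{M}$'' destroys the difference structure and yields no Lipschitz estimate at all. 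You must first swap, $\mathbb{P}\left[(\nabla(\eta_{1}-\eta_{2}))^{\ast}v_{1}\right]=-\mathbb{P}\left[(\nabla v_{1})^{\ast}(\eta_{1}-\eta_{2})\right]$, exactly as you did in the $L^{2}$ case, and only then invoke the algebra property to get $C\|v_{1}\|_{s}\|\eta_{1}-\eta_{2}\|_{s-1}\le CM\|\eta_{1}-\eta_{2}\|_{s-1}$; your handling of the second summand is fine.

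A second, lesser point: your $r=s$ argument is a multi-index Leibniz expansion and therefore only covers integer $s$, whereas the lemma is stated for every real $s>\frac{d}{2}+1$. For fractional $s$ the same idea must be run with $J^{s}=(1-\Delta)^{s/2}$ and a Kato--Ponce commutator estimate of the form $\|[J^{s},f]g\|_{L^{2}}\lesssim\|\nabla f\|_{L^{\infty}}\|g\|_{s-1}+\|f\|_{s}\|g\|_{L^{\infty}}$ replacing the Leibniz rule; this deserves at least a remark. (Cosmetically: the $\|u\|_{s}$ in (\ref{eq1lemma3}) is a typo for $\|\eta\|_{s}$, which you have silently and correctly assumed, and the constant in (\ref{eq2lemma3}) necessarily grows linearly in $M$, as your $C_{M}$ does and as the paper's later use of $C_{3}M$ confirms.)
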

The next lemma gives uniform bounds on the $H^{s}$ norms of solutions to the transport equations \eqref{4} and \eqref{6}. We will consider the following system:
\begin{equation}
	\label{23}
	\begin{cases}
		& \partial_{t}f+\left( u\cdot\nabla\right) f=g,\\ 
		& f(0)=f_{0}.
	\end{cases}
\end{equation}
where $f,\, g:[0,T]\times\mathbb{T}^{d}\to\mathbb{T}^{d}$ and $u$ is divergence-free.
\begin{mylem}\label{lemma4}
	Let $s>\frac{d}{2}+1$  and fix $f_{0}\in H^{s},\, g\in\Sigma_{s}$. If $u\in\Sigma_{s}$ is non-zero and divergence free then exists a unique solution $f$ to \eqref{23}. Furthermore, the solution $f\in\Sigma_{s}\cap C^{1}([0,T]; H^{s-1})\cap C^{1}([0, T]\times\mathbb{T}^{d})$ and there exists $C_{4}>0$ (from Lemma \ref{lemma2}) such that if $r,\, t\in [0,T]$ we have:
	\begin{equation}\label{eqlemma4}
		\| f(t)\|_{s}\leq\left(\| f(r)\|_{s}+\frac{\| g\|_{\Sigma_{s}}}{C_{4}\| u\|_{\Sigma_{s}}}\right)\exp\left( C_{4}\vert t-r\vert\| u\|_{\Sigma_{s}}\right)-\frac{\| g\|_{\Sigma_{s}}}{C_{4}\| u\|_{\Sigma_{s}}}.
	\end{equation}
\end{mylem}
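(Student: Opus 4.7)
The plan is to prove this standard transport-equation result in three steps: (i) derive the estimate \eqref{eqlemma4} as an a priori bound via an $H^s$ energy argument, (ii) construct solutions by Friedrichs mollification using that bound for compactness, and (iii) recover the regularity and uniqueness from the equation itself.

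\emph{A priori estimate.} I would take the $H^s$ inner product of \eqref{23} with $f(t)$ to obtain
\begin{equation*}
\tfrac{1}{2}\tfrac{d}{dt}\|f(t)\|_s^2 \;=\; -(B(u,f),f)_s + (g,f)_s.
\end{equation*}
By Lemma \ref{lemma2}, $|(B(u,f),f)_s|\leq C_2\|u\|_s\|f\|_s^2$, and Cauchy--Schwarz gives $|(g,f)_s|\leq \|g\|_s\|f\|_s$. Setting $C_4:=2C_2$, dividing by $\|f(t)\|_s$ (with the usual regularization on its zero set) yields the linear differential inequality
\begin{equation*}
\tfrac{d}{dt}\|f(t)\|_s \;\leq\; C_4\|u\|_{\Sigma_s}\|f(t)\|_s + \|g\|_{\Sigma_s},
\end{equation*}
which integrates via Gronwall between $r$ and $t$ to exactly \eqref{eqlemma4}.

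\emph{Existence and uniqueness.} Approximate $(u,f_0,g)$ by a Friedrichs mollifier to smooth data $(u^{\varepsilon},f_0^{\varepsilon},g^{\varepsilon})$, keeping $u^{\varepsilon}$ divergence-free; the regularized problem is solved classically through the smooth flow of $u^{\varepsilon}$. The a priori bound of Step 1 then produces a uniform estimate on $\|f^{\varepsilon}\|_{\Sigma_s}$, while the equation combined with the product estimate for $B$ bounds $\partial_t f^{\varepsilon}$ uniformly in $C^{0}([0,T];H^{s-1})$. Aubin--Lions compactness then extracts a subsequence converging strongly in $C^{0}([0,T];H^{s-1})$ and weakly-$*$ in $L^{\infty}([0,T];H^{s})$ to a distributional solution $f$ of \eqref{23}. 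Uniqueness is obtained by applying the same energy argument at the $L^{2}$ level (this is where $\operatorname{div} u = 0$ eliminates the advection contribution) to the difference of two solutions with identical data.

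\emph{Regularity and the main obstacle.} The identity $\partial_{t}f = g - u\cdot\nabla f$ together with the product estimate immediately gives $\partial_{t}f \in C^{0}([0,T];H^{s-1})$, hence $f\in C^{1}([0,T];H^{s-1})$. The Sobolev embeddings $H^{s}\hookrightarrow C^{1}$ and $H^{s-1}\hookrightarrow C^{0}$, both valid since $s-1>d/2$, then yield $f\in C^{1}([0,T]\times\mathbb{T}^{d})$. The genuine difficulty is promoting weak-$*$ continuity in $L^{\infty}([0,T];H^{s})$ to strong continuity $f\in\Sigma_{s}=C^{0}([0,T];H^{s})$; this is the classical Bona--Smith issue for quasilinear hyperbolic equations, handled by exploiting the energy identity to obtain continuity of $t\mapsto\|f(t)\|_{s}^{2}$ and combining it with weak continuity in $H^{s}$ to upgrade to norm, and thus strong, continuity.
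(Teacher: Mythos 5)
Your argument is correct. Note that the paper itself gives no proof of this lemma: it is imported verbatim from Lemma 4 of \cite{Pooley}, so the only meaningful comparison is with that source. Your route is essentially the standard one used there: the a priori bound comes from the $H^s$ energy identity, the commutator estimate of Lemma \ref{lemma2} (which requires only that $u$ be divergence free, so it applies even though $f$ need not be), and Gronwall, whose explicit solution of $y'\leq ay+b$ reproduces \eqref{eqlemma4} exactly, including the $|t-r|$ by time reversal. The one genuine difference is constructive: Pooley--Robinson obtain existence and the $C^1$ regularity by the method of characteristics, writing $f(t,X_t(x))=f_0(x)+\int_0^t g(\tau,X_\tau(x))\,d\tau$ along the $C^1$ flow of $u$ (available since $H^s\hookrightarrow C^{1}$), which yields $f\in\Sigma_s\cap C^1$ rather directly; you instead mollify and invoke Aubin--Lions, which forces you to confront the Bona--Smith upgrade from weak-$*$ to strong $H^s$ continuity. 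You correctly identify that step as the crux and sketch the standard resolution, so the proof is complete in outline, but the characteristics construction buys that continuity more cheaply here because the equation is linear in $f$. Two harmless quirks: $C_4=C_2$ already suffices (your factor $2$ only loosens the bound), and Lemma \ref{lemma2} formally requires $v\in H^{s+1}$, which you correctly sidestep by applying the estimate only to the smooth approximations.
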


\begin{mylem}\label{lemma5}
	For $s>\frac{d}{2}+1$ fix $u_{1},\, u_{1}\in\Sigma_{s}$ and $f_{0}\in H^{s}$. Let $g_{1}=g_{2}=0$ or $g_{i}=-u_{i}$ for $i=1,\, 2$. If $f_{1},\, f_{2}$ are the solutions of \eqref{23} corresponding to $u_{1},\, u_{2},\, g_{1},\, g_{2}$ respectively, then in the case that $g_{1}=g_{2}=0$, there exists $C_{5}>0$ depending only of $s$ such that
	\begin{equation}\label{29}
		\| f_{1}(t)-f_{2}(t)\|_{L^{2}}\leq C_{5}\| f_{1}+f_{2}\|_{\Sigma_{s}}\| u_{1}-u_{2}\|_{\Sigma_{0}}t
	\end{equation}
for all $t\in[0, T]$. In the case that $g_{i}=-u_{i}$ for $i=1,\, 2$ we instead have
\begin{equation}
		\| f_{1}(t)-f_{2}(t)\|_{L^{2}}\leq \left( C_{5}\| f_{1}+f_{2}\|_{\Sigma_{s}}+1\right)\| u_{1}-u_{2}\|_{\Sigma_{0}}t
\end{equation}
\end{mylem}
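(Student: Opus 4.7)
The plan is to run an $L^2$ energy estimate on the difference $w := f_1 - f_2$. Subtracting the two transport equations and using the symmetric decomposition
\begin{equation*}
u_1\cdot\nabla f_1 - u_2\cdot\nabla f_2 = \tfrac{1}{2}(u_1+u_2)\cdot\nabla(f_1-f_2) + \tfrac{1}{2}(u_1-u_2)\cdot\nabla(f_1+f_2),
\end{equation*}
I would obtain the transport equation
\begin{equation*}
\partial_t w + \tfrac{1}{2}(u_1+u_2)\cdot\nabla w = -\tfrac{1}{2}(u_1-u_2)\cdot\nabla(f_1+f_2) + (g_1-g_2), \qquad w(0)=0.
\end{equation*}
The key structural point is that $\tfrac{1}{2}(u_1+u_2)$ is divergence-free, so that the quadratic term disappears when we test against $w$ in $L^2$.

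Next I would take the $L^2$ inner product with $w$, so that the advection term vanishes by integration by parts, yielding
\begin{equation*}
\tfrac{1}{2}\tfrac{d}{dt}\|w\|_{L^2}^2 \leq \tfrac{1}{2}\|u_1-u_2\|_{L^2}\,\|\nabla(f_1+f_2)\|_{L^\infty}\,\|w\|_{L^2} + \|g_1-g_2\|_{L^2}\,\|w\|_{L^2}.
\end{equation*}
Since $s > \tfrac{d}{2}+1$, the Sobolev embedding $H^{s-1}\hookrightarrow L^\infty$ gives $\|\nabla(f_1+f_2)\|_{L^\infty} \leq C\,\|f_1+f_2\|_{\Sigma_s}$ for a constant $C=C(s,d)$. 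Dividing by $\|w\|_{L^2}$ (with the usual approximation argument in case it vanishes) produces a differential inequality for $\|w\|_{L^2}$ whose right-hand side does not involve $w$ itself.

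Because $w(0)=0$, integrating from $0$ to $t$ gives
\begin{equation*}
\|w(t)\|_{L^2} \leq \tfrac{C}{2}\,\|f_1+f_2\|_{\Sigma_s}\int_0^t \|u_1-u_2\|_{L^2}\,ds + \int_0^t \|g_1-g_2\|_{L^2}\,ds.
\end{equation*}
Setting $C_5 := C/2$, in the case $g_1=g_2=0$ the second integral vanishes and we recover \eqref{29}. In the case $g_i=-u_i$, we have $\|g_1-g_2\|_{L^2} = \|u_1-u_2\|_{L^2}$, and bounding both integrals by $\|u_1-u_2\|_{\Sigma_0}\,t$ delivers the second inequality with the extra $+1$ factor. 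The only subtle step, which I expect to be the main (though minor) obstacle, is justifying the symmetric splitting as the one that lets us close the estimate in $L^2$ rather than $H^s$ — a more naive splitting would put $\nabla w$ against a term one does not control in $L^2$; the symmetric form ensures that only $\nabla(f_1+f_2)$ in $L^\infty$ appears, which is controlled through the $H^s$ norm and therefore does not force us to pay regularity on $u_1-u_2$.
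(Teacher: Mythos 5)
Your proof is correct. The paper does not actually prove this lemma itself --- it is quoted from Lemma 5 of \cite{Pooley} --- and your symmetric-splitting $L^{2}$ energy argument (writing $u_{1}\cdot\nabla f_{1}-u_{2}\cdot\nabla f_{2}=\tfrac{1}{2}(u_{1}+u_{2})\cdot\nabla w+\tfrac{1}{2}(u_{1}-u_{2})\cdot\nabla(f_{1}+f_{2})$, killing the first term by incompressibility, and controlling $\nabla(f_{1}+f_{2})$ in $L^{\infty}$ via $H^{s-1}\hookrightarrow L^{\infty}$) is exactly the standard argument used there, so nothing further is needed.
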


We consider  the following  transport equation:
\begin{equation}\label{4}
	\partial_{t} B+\left( \tilde{u}\cdot\nabla\right) B=0,
\end{equation}
\begin{equation}\label{6}
	\partial_{t} v+\left( \tilde{u}\cdot\nabla\right) v=0.
\end{equation}
Given an initial divergence-free velocity $u_{0}$ for the classic equations, we choose initial conditions for the above system as follows:
\begin{equation}\label{7}
	B\left( x, 0\right)=x,
\end{equation}
\begin{equation}\label{8}
	u\left( x,0\right)=v\left( x,0\right)=u_{0}\left( x\right).
\end{equation}

 Also  $\eta\left( x,t\right):=B\left( x,t\right)-x$ and replace \eqref{4} and \eqref{7} with the equations
\begin{equation}\label{90}
	\partial_{t}\eta+\left( \tilde{u}\cdot\nabla\right)\eta+\tilde{u}=0,
\end{equation}
\begin{equation}\label{91}
	\eta\left( x, 0\right)=0
\end{equation}
respectively. We do this because the identity map (hence $B$) does not have sufficient Sobolev regularity when considered as a function on the torus with values in $\mathbb{R}^{d}$ (i.e. whitout accounting for the topology of the target torus).\\

\subsection{Contraction }

The aim of the rest of this paper is to prove the following theorem.
\begin{myth}
	If $d\geq 2$, $s>\frac{d}{2}+1$ and $u_0\in H^s$ is divergence free then there exists $T(\omega)>0$, such that the systems \eqref{4}-\eqref{5}  has  solution   $u\in\Sigma_{s}(T)$
\end{myth}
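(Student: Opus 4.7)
The plan is to construct the solution via a Picard iteration in $\Sigma_s(T)$ for the pulled--back velocity $\tilde{u}$, exploiting the decomposition $X_t=\varphi_t\circ Y_t$ from Subsection 3.1. Given any $\tilde{u}\in B_M(T)$, I solve the transport equations \eqref{90}--\eqref{91} for $\eta$ and \eqref{6},\eqref{8} for $v$ using $\tilde{u}$ as transport velocity (Lemma \ref{lemma4}), and set $B:=I+\eta$, which is the back--to--labels map for the random flow $Y$ of $\tilde{u}$. Because $A_t=B_t\circ\varphi_t^{-1}$, formula \eqref{5} expresses $u_t$ as an explicit function of $(B,v,\varphi_t)$; pulling this $u_t$ back by $\varphi_t$ defines an operator $\Psi(\tilde{u})$ whose fixed points correspond, via $u:=(\varphi_t)_{\ast}\tilde{u}$, to solutions of \eqref{4}--\eqref{5} on $[0,T]$.

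\textbf{Self--mapping step.} Lemma \ref{lemma4} applied with $g=-\tilde{u}$ for $\eta$ and $g=0$ for $v$ gives, for $\tilde{u}\in B_M(T)$,
\[
\|\eta(t)\|_s+\|v(t)\|_s\leq \bigl(\|u_0\|_s+1\bigr)\exp(C_4MT).
\]
Combined with \eqref{eq1lemma3} of Lemma \ref{lemma3}, this controls $\|\mathbb{P}[(\nabla B)^{\ast}v]\|_s$ by the same quantity up to constants. Since $\varphi_t(\omega,\cdot)$ is a $C^{4,\alpha}$--diffeomorphism, pullback by $\varphi_t$ is bounded on $H^s$ by an a.s.--finite, $\omega$--dependent constant $C_\varphi(\omega,T)$, hence
\[
\|\Psi(\tilde{u})\|_{\Sigma_s}\leq C_\varphi(\omega,T)\bigl(\|u_0\|_s+1\bigr)\exp(C_4MT).
\]
Fixing $M:=4C_\varphi(\omega,1)(\|u_0\|_s+1)$ and shrinking $T=T(\omega)$ makes the right--hand side at most $M$, so $\Psi$ maps $B_M(T)$ into itself.

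\textbf{Contraction step.} As is standard for Kato--type quasilinear arguments, I do not attempt a contraction in $\Sigma_s$ (where Lemma \ref{lemma3} does not provide linear-in-differences estimates of the right form), but in the weaker norm $\Sigma_0$. Lemma \ref{lemma5} yields
\[
\|\eta_1-\eta_2\|_{\Sigma_0}+\|v_1-v_2\|_{\Sigma_0}\leq C(M,\|u_0\|_s)\,T\,\|\tilde{u}_1-\tilde{u}_2\|_{\Sigma_0},
\]
\eqref{eq2lemma3} with $X=L^2$ controls the Leray--Hodge projection, and pullback by $\varphi$ is Lipschitz on $L^2$ with constant depending only on $\|\nabla\varphi\|_\infty$ and $\|\nabla\varphi^{-1}\|_\infty$. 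Combining these bounds,
\[
\|\Psi(\tilde{u}_1)-\Psi(\tilde{u}_2)\|_{\Sigma_0}\leq C(\omega,M)\,T\,\|\tilde{u}_1-\tilde{u}_2\|_{\Sigma_0},
\]
a strict contraction after one more reduction of $T(\omega)$.

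Since $B_M(T)$ is not closed in $\Sigma_0$, I conclude by iteration rather than by Banach's theorem: starting from $\tilde{u}^{(0)}\equiv u_0$, the sequence $\tilde{u}^{(n+1)}:=\Psi(\tilde{u}^{(n)})$ is uniformly bounded in $\Sigma_s$ and Cauchy in $\Sigma_0$; interpolation then gives convergence in $\Sigma_{s'}$ for every $s'<s$, while weak lower semicontinuity keeps the limit $\tilde{u}$ in $\Sigma_s$ with norm $\leq M$. Continuity of the transport solution map and of the projection in the $\Sigma_0$ topology forces $\tilde{u}=\Psi(\tilde{u})$, and then $u:=(\varphi_t)_{\ast}\tilde{u}\in\Sigma_s(T)$ solves \eqref{4}--\eqref{5} pathwise on $[0,T(\omega)]$. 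The main technical obstacle is that every constant above is random: one must select $T(\omega)$ as an a.s.--positive stopping time which simultaneously bounds $\|\varphi\|_{C^{4,\alpha}}$ on $[0,T(\omega)]$ and forces both the self--mapping and contraction inequalities --- precisely the pathwise reduction that the decomposition $X=\varphi\circ Y$ is designed to enable.
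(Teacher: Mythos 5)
Your proposal is correct and follows essentially the same route as the paper: decompose $X_t=\varphi_t\circ Y_t$, solve the transport equations \eqref{90} and \eqref{6} for $\eta$ and $v$, show the resulting map preserves a ball $B_M(T)$ in $\Sigma_s$ via Lemmas \ref{lemma3} and \ref{lemma4}, prove it is a contraction in the $\Sigma_0$ norm via Lemmas \ref{lemma3} and \ref{lemma5}, and recover a fixed point in $B_M$ from the uniform $H^s$ bound together with weak closedness. The only (cosmetic) difference is that you run the fixed-point iteration on the pulled-back field $\tilde{u}$ rather than on $u$ itself; since pullback by the $C^{4,\alpha}$-flow $\varphi_t$ is a pathwise-bounded isomorphism on $H^s$ and $L^2$, this is equivalent to the paper's operator $S$.
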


\begin{proof}
 For $u\in\Sigma_{s}$, we consider the following system:

\begin{equation}
\label{eqn:1.1}
\begin{cases}
 & \tilde{u}_{t}\left( x\right)=\left[\left(\varphi_{t}^{-1}\right)_{\ast}u_{t}\right]\left( x\right),\\ 
 & \dot{Y}_{t}=\tilde{u}_{t}\left( Y_{t}\right),\,\,\,\,\, Y_{0}=I,\\
 & X_{t}=\varphi_{t}\left( Y_{t}\right),\\
 & S{u}_{t}\left( x\right)=\mathbb{P}\left[\left(\nabla X_{t}^{-1}\right)^{\ast} u_{0}\left( X_{t}^{-1}\right)\right]\left( x\right).
\end{cases}
\end{equation}

From  Chapter 7 of \cite{Behzadan}  we have that exists positives constants $C_{0}:=C_{0}(t, \varphi)$, $C_{1}:=C_{1}(t, \varphi)$, $C_{2}:=C_{2}(t, \varphi)$ such that 
\begin{eqnarray}
	\label{c1}
	&\|\varphi^{-1}\|_{s}\leq C_{1};\\
	\label{c0}
	& \|\tilde{u}\|_{s}=\| \left(\nabla\varphi^{-1}\right)(\varphi)u(\varphi)\|_{s}\leq C_{0}\| u\|_{s};\\
	\label{c2}
	& \|\eta\circ\varphi^{-1}\|_{s}\leq C_{2}\|\eta\|_{s}\hspace{0,2cm}\text{and}\hspace{0,2cm}\| v\circ\varphi^{-1}\|_{s}\leq C_{2}\| v\|_{s}.
\end{eqnarray}

Fix $s>\frac{d}{2}+1$ and let $C_3$, $C_4$ be the constants in \eqref{eq1lemma3}, \eqref{eqlemma4} (from Lemmas \ref{lemma3} and \ref{lemma4}) respectively. Fix $\| u_{0}\|_{s}<M$ and $T>0$ so that
\begin{equation*}
	C\| u_{0}\|_{s}\exp\left( C_{4}C_{0}TM\right)\left[\frac{C_{2}^{2}}{C_{1}C_{4}}\left(\exp\left( C_{4}C_{0}TM\right)-1\right)+1\right]\leq M,
\end{equation*}
where C is a constant to be defined later.\\
Let $u\in B_{M}(T)$ be a divergence free function and let $\eta=Y_{t}^{-1}-x$ is unique solution of \eqref{90} with initial data $\eta_{0}=0$. Let $v=u_{0}(Y_{t}^{-1})$
 is the unique solution of \eqref{6} for initial data $v_0=u_0$.

From \eqref{c0} follow that
\begin{equation*}
	\| \tilde{u}\|_{\Sigma_{s}}=\sup_{t\in[0,T]}\| \tilde{u}\|_{s}\leq C_{0}M,
\end{equation*}
Then by Lemma \ref{lemma4},

\begin{equation}\label{v}
	\| v(t)\|_{s}\leq\| u_{0}\|_{s}\exp\left( C_{4}t\|\tilde{u}\|_{\Sigma_s}\right)\leq\| u_{0}\|_{s}\exp\left( C_{0}C_{4}TM\right)
\end{equation}
and
\begin{equation}\label{eta}
	\|\eta(t)\|_{s}\leq\frac{1}{C_{4}}\left(\exp\left( C_{0}C_{4}TM\right)-1\right).
\end{equation}
Hence, with $C=C_{1}C_{2}C_{3}$, (the constants $C_1$, $C_2$, $C_3$ are given by \eqref{c1}, \eqref{c2} and Lemma \ref{lemma3} respectively)
\begin{align*}
	\| Su(t)\|_{s}&\leq\|\mathbb{P}\left[(\nabla(\eta\circ\varphi^{-1}))^{\ast}v(\varphi^{-1})\right]\|_{s}+\|\mathbb{P}\left[(\nabla\varphi^{-1})^{\ast}v(\varphi^{-1})\right]\|_{s}\\
	&\leq C_{3}\|\eta\circ\varphi^{-1}\|_{s}\| v(\varphi^{-1})\|_{s}+C_{3}\|\varphi^{-1}\|_{s}\| v(\varphi^{-1})\|_{s}\\
	&\leq C_{3}C_{2}^{2}\|\eta\|_{s}\| v\|_{s}+C_{3}C_{1}C_{2}\| v\|_{s}\\
	&=C_{3}C_{2}\| v\|_{s}\left( C_{2}\|\eta\|_{s}+C_{1}\right),
\end{align*}
here the third inequality follows from \eqref{c1} and \eqref{c2}. Then by \eqref{v} and \eqref{eta}
\begin{align}\nonumber
	\| Su(t)\|_{s}&\leq \| u_{0}\|_{s}\exp\left( C_{4}C_{0}tM\right)\left[\frac{C_{3}C_{2}^{2}}{C_{4}}\left(\exp\left( C_{4}C_{0}tM\right)-1\right)+C_{3}C_{1}C_{2}\right]\\
	\label{bola}
	&=C\| u_{0}\|_{s}\exp\left( C_{4}C_{0}tM\right)\left[\frac{C_{2}^{2}}{C_{1}C_{4}}\left(\exp\left( C_{4}C_{0}tM\right)-1\right)+1\right]\leq M
\end{align}
for all $t\in[0,T]$. Hence $S:B_{M}(T)\to B_{M}(T)$.\\

Before  proving that the map $S$ is a contraction in a certain space, we need to prove a couple of inequalities.  By Lemmas \ref{lemma3}, \ref{lemma4} and \ref{lemma5} we have 

\begin{align*}
	&\left\|\mathbb{P}\left[\left(\nabla\left(\eta_{1}\circ\varphi^{-1}\right)\right)^{\ast}v_{1}\left(\varphi^{-1}\right)-\left(\nabla\left(\eta_{2}\circ\varphi^{-1}\right)\right)^{\ast}v_{2}\left(\varphi^{-1}\right)\right]\right\|_{L^{2}}\\
	&\hspace{1,5cm}\leq C_{3}M\left[\left\|\eta_{1}\circ\varphi^{-1}-\eta_{2}\circ\varphi^{-1}\right\|_{L^2}+\left\| v_{1}\circ\varphi^{-1}-v_{2}\circ\varphi^{-1}\right\|_{L^2}\right]\\
	&\hspace{1,5cm}= C_{3}M\left[\left\|\eta_{1}-\eta_{2}\right\|_{L^2}+\left\| v_{1}-v_{2}\right\|_{L^2}\right]\\
	&\hspace{1,5cm}\leq C_{3}M\left[\left( C_{5}\left\|\eta_{1}+\eta_{2}\right\|_{\Sigma_s}+1\right)\left\|\tilde{u}_{1}-\tilde{u}_{2}\right\|_{\Sigma_0}t+C_{5}\left\| v_{1}-v_{2}\right\|_{\Sigma_{s}}\left\|\tilde{u}_{1}-\tilde{u}_{2}\right\|_{\Sigma_0}t\right]\\
	&\hspace{1,5cm}\leq C_{3}M\left[\left(\frac{2C_5}{C_4}\left(\exp\left( C_{4}C_{0}tM\right)-1\right)+1\right)t\left\|\tilde{u}_{1}-\tilde{u}_{2}\right\|_{\Sigma_0}+2C_{5}Mt\left\| u_{0}\right\|_{s}\exp\left( C_{4}C_{0}tM\right)\left\|\tilde{u}_{1}-\tilde{u}_{2}\right\|_{\Sigma_0}\right]\\
	&\hspace{1,5cm}\leq C_{3}M\left[\left(\frac{2C_{5}}{C_{4}}\left(\exp\left( C_{4}C_{0}TM\right)-1\right)+1\right) t+2C_{5}t\left\| u_{0}\right\|_{s}\exp\left(C_{4}C_{0}TM\right)\right]\left\|\tilde{u}_{1}-\tilde{u}_{2}\right\|_{\Sigma_{0}},
\end{align*}
and
\begin{align*}
	\left\|\mathbb{P}\left[\left(\nabla\varphi^{-1}\right)^{\ast}\left( v_{1}(\varphi^{-1})-v_{2}(\varphi^{-1})\right)\right]\right\|_{L^{2}}&\leq C_{3}M\left\| v_{1}(\varphi^{-1})-v_{2}(\varphi^{-1})\right\|_{L^2}\\
	&=C_{3}M\left\| v_{1}-v_{2}\right\|_{L^2}\\
	&\leq C_{3}C_{5}M\left\| v_{1}-v_{2}\right\|_{\Sigma_{s}}\left\|\tilde{u}_{1}-\tilde{u}_{2}\right\|_{\Sigma_{0}}t\\
	&\leq 2C_{3}C_{5}Mt\left\| u_{0}\right\|_{s}\exp\left( C_{4}tC_{0}M\right)\left\|\tilde{u}_{1}-\tilde{u}_{2}\right\|_{\Sigma_{0}},
\end{align*}
where $C_{3}$, $C_{4}$, $C_{5}$ are the constants from Lemmas \ref{lemma3}, \ref{lemma4} and \ref{lemma5} respectively.\\

Now we will show that the map $S$ is a contraction on $B_{M}(T)$ in the $L^{2}$ norm if $T$ is sufficiently small. For $u_{1},\, u_{2}\in B_{M}(T)$ we construct $v_{i}$ and $\eta_{i}$ from $u_{i}$ as above for $i=1,\,2$ with $v_{1}(\cdot,0)=v_{2}(\cdot,0)=u_{0}$. Then by the inequalities above
\begin{align}\nonumber
	\left\| Su_{1}-Su_{2}\right\|_{L^{2}} &\leq\left\|\mathbb{P}\left[\left(\nabla\left(\eta_{1}\circ\varphi^{-1}\right)\right)^{\ast}v_{1}\left(\varphi^{-1}\right)-\left(\nabla\left(\eta_{2}\circ\varphi^{-1}\right)\right)^{\ast}v_{2}\left(\varphi^{-1}\right)\right]\right\|_{L^{2}}\\
	\nonumber
	&\hspace{4cm}+\left\|\mathbb{P}\left[\left(\nabla\varphi^{-1}\right)^{\ast}\left( v_{1}(\varphi^{-1})-v_{2}(\varphi^{-1})\right)\right]\right\|_{L^{2}}\\
	\label{contraction}
	&\leq C_{3}M\left(\frac{2C_{5}}{C_{4}}\left(\exp\left( C_{4}C_{0}TM\right)-1\right)+1\right) t\left\|\tilde{u}_{1}-\tilde{u}_{2}\right\|_{\Sigma_{0}}\\
	\nonumber
	&\hspace{4cm}+4C_{3}C_{5}Mt\left\| u_{0}\right\|_{s}\exp\left(C_{4}C_{0}TM\right)\left\|\tilde{u}_{1}-\tilde{u}_{2}\right\|_{\Sigma_{0}}\\
	\nonumber	
	&\leq 2TMC_{3}C_{6}\left(C_{5}\left(2\|u_{0}\|_{s}+\frac{1}{C_{4}}\right)\exp\left(C_{4}C_{0}TM\right)+\frac{1}{2}-\frac{C_{5}}{C_{4}}\right)\left\| u_{1}-u_{2}\right\|_{\Sigma_{0}}\\
	\nonumber
	&= C(u_{0},\,\omega, \, M,\, T)\left\| u_{1}-u_{2}\right\|_{\Sigma_{0}},
\end{align}
where lasts  inequality follows from a change of variables and H\"older's inequality
\begin{align*}
	\left\|\tilde{u}_{1}-\tilde{u}_{2}\right\|_{\Sigma_0}&=\sup_{t\in[0,T]}\left\|\tilde{u}_{1}-\tilde{u}_{2}\right\|_{L^2}=\sup_{t\in[0,T]}\left\|\left(\nabla\varphi\right)^{-1}u_{1}(\varphi)-\left(\nabla\varphi\right)^{-1}u_{2}(\varphi)\right\|_{L^2}\\
	&=\sup_{t\in[0,T]}\left\|\left(\nabla\varphi^{-1}\right)(\varphi)\left( u_{1}-u_{2}\right)(\varphi)\right\|_{L^2}\\
	&=\sup_{t\in[0,T]}\left\|\left(\nabla\varphi^{-1}\right)\left( u_{1}-u_{2}\right)\right\|_{L^2}\leq\sup_{t\in[0,T]}\left(\left\|\nabla\varphi^{-1}\right\|_{L^\infty}\left\| u_{1}-u_{2}\right\|_{L^2}\right)\\
	&\leq C_{6}\left\|u_{1}-u_{2}\right\|_{\Sigma_{0}},
\end{align*}
where $C_{6}:=\sup_{t\in[0,T]}\left\|\nabla\varphi^{-1}\right\|_{L^\infty}$.\\

We observe that  $C( u_{0},\,\omega,\, M,\, T)$ is given by the formula
\begin{equation}\label{constante}
	C( u_{0},\,\omega,\, M,\, T):=2T\left[ C_{5}MC_{3}C_{6}\left( 2\left\| u_{0}\right\|_{s}+\frac{1}{C_{4}}\right)\exp\left(C_{4}C_{0}TM\right)+C_{3}C_{6}M\left(\frac{1}{2}-\frac{C_{5}}{C_{4}}\right)\right],.
\end{equation}

 We observe that   $\varphi\in C^{0}([0,T],C^{4,\alpha}(\mathbb{T}^{d},\mathbb{T}^{d}))$ then this  implies that $C_0, C_1, C_2$ and $C_6$ are limited  uniformly in time.
Then taking the supreme of \eqref{contraction} with respect to $t$ and choosing $T>0$ small enough, we see that $S$ is a contraction. 
We finished the proof as in  \cite{Pooley}.
We conclude that $S$ has a unique accumulation point $u$, in the closure of $B_M$ with respect to $\| \cdot \|_{\Sigma_{0}}$. Since $B_M(T)$ is convex and closed in $\Sigma_{s}$ it is weakly closed, hence $u\in B_M$ is a fixed point of $S$. 

\end{proof}

\section*{Acknowledgements}
This study was financed in part by the Coordenação de
Aperfeiçoamento de Pessoal de Nível Superior - Brasil (CAPES) - Finance Code 001- on behalf of the first-named author. The second author is funded by the grant $2020/15691-2$ and $2020/04426-6$, São Paulo Research Foundation (FAPESP)




\begin{thebibliography}{9}                                                                                                

\bibitem{Alonso1}
D. Alonso-Oran, A Bethencourt-de-Leon, {\it  On the well-posedness of
stochastic Boussinesq equations with transport noise},  Journal of Nonlinear Science 30, 2020.


\bibitem{Alonso2}
D. Alonso-Oran, A. Bethencourt-de-Leon, D.D.Holm, S. Takao.:
{\it  Modelling the climate and weather of a 2D Lagrangian-averaged
Euler–Boussinesq equation with transport noise},  Journal of Statistical
Physics 179, 5, 2020.


\bibitem{Behzadan}
A. Behzadan, and M. Holst, {\it Sobolev-Slobodeckij Spaces on Compact Manifolds}, Mathematics 10, 2022.

\bibitem{Besse}
N. Besse, {\it Stochastic Lagrangian perturbation of Lie transport and applications to fluids}, Nonlinear Analysis,  232, 2023.


\bibitem{Brzeniak}
 Z. Brzeniak, M. Capinski, F. Flandoli, {\it  Stochastic navier-stokes equations with multiplicative noise},  Stoch. Anal. Appl. 10, 1992. 

\bibitem{Chow} P.L. Chow,  {\it Stochastic partial differential equations}, 2nd ed., Advances in Applied Mathematics (Boca Raton), Boca Raton, FL: CRC Press, 2015. 


\bibitem{Constantin}
P. Constantin, {\it  An Eulerian-Lagrangian approach for incompressible fluids: local theory}, 
Journal of the American Mathematical Society, 14,  2001. 


\bibitem{Constantin2}
P.Constantin,  and G. Iyer,  {\it  A stochastic Lagrangian representation of the three-dimensional incompressible Navier-Stokes equations}, 
61, 2008. 


\bibitem{Fang}
 S. Fang, D. Luo, {\it Constantin and Iyer’s representation formula for the NavierStokes equations on manifolds}, Potential Anal. 48, 2018.




\bibitem{Falkovich}
G. Falkovich, K. Gawedzki, M. Vergassola, {\it  Particles and fields in fluid turbulence},  Rev.
Modern Phys. 73, 2002.


\bibitem{Flandoli}
F. Flandoli, Random Perturbation of PDEs and Fluid Dynamic Models, Ecole
Saint Flour 2010, Springer-Verlag, Berlin, 2011.



\bibitem{Flandoli2}
F. Flandoli,  D. Luo,  {\it  Euler-Lagrangian approach to 3D stochastic Euler equations},  Journal of Geometric Mechanics, 18, 2019.


\bibitem{Flandoli3}
F. Flandoli, M. Maurelli, M.  Neklyudov {\it  Noise prevents infinite stretching of the passive
field in a stochastic vector advection equation},  J. Math. Fluid Mech. 16, 2014. 

\bibitem{Flandoli4}
 F. Flandoli, C. Olivera, {\it  Well-posedness of the vector advection equations by stochastic
perturbation}  J. Evol. Equ, 18, 2018.



\bibitem{Ku2}
H. Kunita.  {\it  Stochastic differential equations and stochastic
flows of diffeomorphisms}, Lectures Notes in Mathematics,
Springer-Verlag, Berlin, 1097, 143-303, 1982.


\bibitem{Kunita}
H. Kunita, {\it Stochastic flows and stochastic differential equations}, Cambridge Studies in Advanced Mathematics, Cambridge etc.: Cambridge University Press, 24, 1990.


\bibitem{Ledesma}
D.S. Ledesma, A local solution to the Navier-Stokes equations on manifolds via
stochastic representation, Nonlinear Anal. 198, 2020. 



\bibitem{Olivera}
C. Olivera,  {\it Probabilistic representation for mild solution of the Navier–Stokes equations}, Mathematical Research Letters, 28, 2021.



\bibitem{Pooley}
B. Pooley,  J.  Robinson, {\it An Eulerian-Lagrangian form for the Euler equations in Sobolev spaces}, J. Math. Fluid Mech.,  18, 2016.


\bibitem{Rezakhanlou}
F. Rezakhanlou {\it Stochastically symplectic maps and their applications to the Navier–Stokes equation}, 
Annales de l'Institut Henri Poincare C, Analyse non lineaire, 33, 2016. 


 





%
%
%
%


\end{thebibliography}
\end{document}